  \let\ms=\mathscr \let\mc=\mathcal \let\go=\mathfrak
  \let\bb=\mathbb \let\mb=\mathbf
\let\dl=\delta
\let\dg=\dagger
\let\into=\hookrightarrow
\let\onto=\twoheadrightarrow
\let\ox=\otimes
\let\ts=\textstyle
\let\vf=\varphi
\let\x=\times
\def\tqn#1{\text{\quad#1\enspace}}
\def\(#1){{\rm(#1)}}
\def\risom{\buildrel\sim\over{\smashedlongrightarrow}}
  \def\smashedlongrightarrow{\setbox0=\hbox{$\longrightarrow$}\ht0=1pt\box0}
\def\Im{\mathop{\rm Im}}
\def\Ker{\mathop{\rm Ker}}
\def\length{\mathop{\rm length}}
\def\Proj{\mathop{\rm Proj}}
\def\rank{{\mathop{\rm rank}}}
\def\Spec{\mathop{\rm Spec}}
\def\Supp{\mathop{\rm Supp}}
\def\Sym{\mathop{\rm Sym}}
\DeclareMathOperator\Fitt{{\rm Fitt}}
\newtheorem{lemma}{Lemma}
\newtheorem{proposition}[lemma]{Proposition}
\newtheorem{theorem}[lemma]{Theorem}
\theoremstyle{definition}
 \newtheorem{remark}[lemma]{Remark}
 \newtheorem{definition}[lemma]{Definition}
 \newtheorem{example}[lemma]{Example}
\numberwithin{equation}{lemma}
\begin{document}

\begin{figure}[t]
\raggedleft \slshape
To the memory of Alexandru (Bobi) Lascu:\\
a dear friend, tireless colleague, and inspiring coauthor.
\end{figure}

\title[Two Formulas]
{Two Formulas for the BR Multiplicity}


\author[S. L. Kleiman]{Steven L. Kleiman}
 \address
 {Dept.\ of Math., 2-172 MIT\\
 77 Mass.\ Ave.\\
 Cambridge, MA 02139, USA}
 \email{Kleiman@math.MIT.edu}

\subjclass[2010]{13H15, (14C17, 13D40)}

\keywords{Buchsbaum--Rim multiplicity, Intersection theory,
Hilbert--Samuel polynomials}

\date{\today}

\begin{abstract}
 We prove a {\it projection formula,} expressing a relative
Buchsbaum--Rim multiplicity in terms of corresponding ones over a
module-finite algebra of pure degree, generalizing an old formula for
the ordinary (Samuel) multiplicity.  Our proof is simple in spirit:
after the multiplicities are expressed as sums of intersection numbers,
the desired formula results from two projection formulas, one for cycles
and another for Chern classes.  Similarly, but without using any
projection formula, we prove an expansion formula, generalizing the {\it
additivity formula} for the ordinary multiplicity, a case of the {\it
associativity formula}.
 \end{abstract}

\maketitle


We prove a projection formula \eqref{eqMain} and an expansion formula
\eqref{eqAF} for the (relative) Buchsbaum--Rim multiplicity.  A special
case of \eqref{eqMain} was requested by Gaffney and Rangachev for use in
an example at the end of Sct.\,4 in their paper \cite{GR}.  We prove
\eqref{eqAF} similarly, but more simply.

Gaffney and Rangachev work in the following setting.  Let $R$ be the
local ring of a 1-dimensional reduced and irreducible analytic germ, $F$
a free module of finite rank, $M\subset N$ a generically equal nested
pair of submodules.  Let $R'$ be the normalization of $R$.  Set
$N':=N\ox R$ and $F_1:=F\ox R'$.  Form the $R'$-submodule $M'$ of $N'$
generated by $M$; form the $R'$-submodules $M_1$ and $N_1$ of $F_1$
generated by $M$ and $N$.  Consider the multiplicities $e(M,N)$ and
$e(M',N')$ and $e(M_1,N_1)$.

Formula \eqref{eqMain} yields $e(M,N)=e(M',N')$.  But
$e(M',N')=e(M_1,N_1)$; see the last paragraph of Rmk.\,\ref{reSthy}.
Now, $R'$ is a discrete valuation ring; so $M_1$ and $N_1$ are free of
the same rank.  Hence, as $R'$ is Cohen--Macaulay, $e(M_1,N_1)$ is, by
Buchsbaum and Rim's Cor.\,4.5 in \cite{BR}, equal to the length
$\ell(N_1/M_1)$, and so equal to the length of the zeroth Fitting ideal
$\ell(\Fitt^0(N_1/M_1)$.  Thus $e(M,N)=\ell(\Fitt^0(N_1/M_))$.  The
latter length is computed with ease at the end of Sct.\,4 in \cite{GR}.

Formula \eqref{eqMain} is our main result.  It is a general projection
formula, expressing a (relative) Buchsbaum--Rim multiplicity over a
Noetherian local ring of any positive dimension in terms of
corresponding multiplicities over a module-finite algebra of pure
degree.  The latter notion is defined in Dfn.\,\ref{dePd}, and
illustrated in Ex.\,\ref{exPd}.

Our proof is simple in spirit.  The multiplicities are expressed as sums
of intersection numbers following \cite{GTBR}.  Then \eqref{eqMain}
results from a projection formula for cycles \eqref{eqlePF} combined with
a projection formula for Chern classes.  The devil is in the details!

We derive \eqref{eqlePF} from a projection formula in Fulton's book
\cite{Fu}.  To do so, we compute the direct image of the top part of a
fundamental cycle; our computation is lengthy, but elementary.  Although
Fulton assumed his schemes are of finite type over a field (or more
generally, any local Artinian ring\,---\,see fn.\,1 on p.\,6), his proof
works here without change, essentially because on a reduced and
irreducible Noetherian scheme, a Cartier divisor is still of pure
codimension 1.

Formula \eqref{eqlePF} equates two cycles with the same divisor as
support.  In the proof of \eqref{eqMain}, all the divisors in play are
proper over (fat) points.  So the requisite theory of Chern classes and
intersection numbers is covered by the first three chapters of Fulton's
book \cite{Fu}.  A simpler, adequate alternative is developed in App.\,B
of \cite{Pic}.

In \cite{GTBR}, the Buchsbaum--Rim multiplicity is defined for a pair of
standard graded algebras.  However, we have an intrinsic choice of such
an algebra associated to any finitely generated module $N$, namely its
Rees algebra $\mc R(N)$, which was introduced and studied by Eisenbud,
Huneke, and Ulrich in \cite{EHU}.  Their definition is recalled in
Dfn.\,\ref{desRM}, so that we can use it there to define the
Buchsbaum--Rim multiplicity $e(M,N)$ for a suitable nested pair of
modules $M\subset N$.

In Complex Analytic Singularity Theory, the nested modules arise as
submodules of a free module.  So it is natural to consider the
subalgebras generated by the submodules inside the symmetric algebra of
the free module.  Usually the singularity is reduced, and then these
subalgebras are equal to the Rees algebras.  Moreover, even if the
singularity is only generically reduced, then these subalgebras can be
used to compute the Buchsbaum--Rim multiplicity.  We discuss this matter
in Rmk.\,\ref{reSthy}.

Prp.\,\ref{prAF} provides the expansion formula \eqref{eqAF}.
Prp.\,\ref{prAF} recovers via a new proof a version of Kirby and Rees's
Thm.\,6.3,\,iii) in \cite{KR}.  Moreover, \eqref{eqAF} recovers the {\it
associativity formula} for the ordinary multiplicity found on p.\,284 of
Eisenbud's book \cite{Eis} and in Thm.\,11.2.4 with $M:=R$ on p.\,218 of
Huneke and Swanson's book \cite{HS}.  However, \eqref{eqAF} does not
recover the older, more general associativity formula discussed in
Rmk.\,\ref{reZS}.  Our proof starts like our proof of
Thm.\,\ref{thMain}, but is simpler.

Thm.\,\ref{thMain} generalizes a similar result for ordinary
multiplicities, which was proved by Zariski and Samuel in \cite{ZS} via
the theory of Hilbert--Samuel polynomials.  We explain this matter and
more in Rmk.\,\ref{reZS}, which closes this paper.

\begin{definition}\label{dePd}
 Let's say a finite map of schemes $\pi\:B'\to B$ is of {\it pure
degree} $\dl$ if $\dl>0$ and if there's a dense open set $U$ of $B$ with
$\pi^{-1}U$ dense in $B'$ and with $(\pi_*\ms O_{B'})|U$ a free $\ms
O_{U}$-module of pure rank $\dl$.

Let's say a module-finite $R$-algebra $R'$ is of {\it pure degree} $\dl$
if the corresponding map of schemes $\Spec R'\to \Spec R$ is of pure
degree $\dl$.  It is equivalent, by prime avoidance, that there be an
$f\in R$ belonging to no minimal prime of $R$ and to no contraction of a
minimal prime of $R'$ with $R'_f$ a free $R_f$-module of pure rank
$\dl$.  \end{definition}

\begin{example}\label{exPd}
Fix  a finite, surjective map of schemes $\pi\:B'\to B$.

First, if $\pi$ is birational, then there's a dense open set $U$ of $B$
with $\pi^{-1}U$ dense in $B'$ and with $\pi|\pi^{-1}U$ an isomorphism
$\pi^{-1}U\risom U$.  Thus $\pi$ is of pure degree 1.

Second, assume $B$ is reduced.  Then, as is well known, there's a dense
open set $U$ with $(\pi_*\ms O_{B'})|U$ a free $\ms O_{U}$-module; for
example, see (*) on p.\,56 of Mumford's book \cite{Mum}.  However, the
rank $\delta$ of $(\pi_*\ms O_{B'})|U$ may vary from component to
component; so assume $B$ is irreducible.  Also, assume that every
component of $B'$ maps onto $B$, so that $\pi^{-1}U$ is dense.  Then
$\pi$ is of pure degree $\delta$.

 Third, assume $\pi$ is of pure degree $\dl$.  Given $x\in U$, let $K$
be an algebraically closed field containing the residue field of $x$.
Then $(\pi_*\ms O_{B'})_x$ is a free $\ms O_{B,x}$-module of rank $\dl$,
so $(\pi_*\ms O_{B'})\ox K$ is a $K$-vector space of dimension $\dl$.  On
the other hand, $(\pi_*\ms O_{B'})\ox K$ is the product of the local
rings of the ``geometric'' fiber $\pi^{-1}\?x$ where $\?x:=\Spec(K)$.
Assume also that $x$ lies outside of the discriminant locus.  Then these
local rings are each a copy of $K$.  Thus the cardinality of
$\pi^{-1}\?x$ is just $\dl$.
 \end{example}

\begin{lemma}\label{lePF}
 Let $\pi\:B'\to B$ be a finite map of pure degree $\dl$ of Noetherian
schemes of positive dimensions $s',\,s$, and $D$ a (Cartier) divisor on
$B$ such that $\pi^*D$ is a well-defined (Cartier) divisor on $B'$.
Then $\pi$ is surjective, and maps each component of $B'$ onto a
component of $B$.  Moreover, $s'=s$ and
 \begin{equation}\label{eqlePF}
 \pi_*\bigl(\pi^*D\cdot [B']_s\bigr)=\dl\,D\cdot[B]_s.
 \end{equation}
where $[B]_s$, $[B']_s$ are the $s$-dimensional parts of the fundamental
cycles of $B$, $B'$.
  \end{lemma}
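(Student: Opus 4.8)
The plan is to reduce \eqref{eqlePF} to a projection formula in \cite{Fu}, the crux being a computation of the direct image $\pi_*[B']_s$. First I would settle the qualitative assertions. Since $\pi$ has pure degree $\dl>0$, there is a dense open $U\subseteq B$ with $\pi^{-1}U$ dense in $B'$ and $(\pi_*\ms O_{B'})|U$ free of rank $\dl$; hence every fiber of $\pi$ over $U$ is nonempty, so $U\subseteq\pi(B')$, and as $\pi$ is finite (hence closed) and $U$ is dense, $\pi$ is surjective. Then $s'=\dim B'=\dim B=s$, because a finite surjective morphism preserves dimension. For the statement on components, note that $B$ and $B'$ have only finitely many components, that the dense open $U$ contains the generic point of every component of $B$, and that $\pi^{-1}U\to U$ is finite and flat; from these facts one checks that $\pi$ carries the generic point of each component of $B'$ into $U$, and thence to a point whose local ring is zero-dimensional, that is, to the generic point of a component of $B$. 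So $\pi$ maps each component of $B'$ onto a component of $B$, and the two have equal dimension since $\pi$ restricted to a component is finite.

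Next I would set up the identity $\pi_*[B']_s=\dl\,[B]_s$. Write $[B]_s=\sum_i n_i[V_i]$, the sum over the $s$-dimensional components $V_i$ of $B$, with generic points $v_i$ and coefficients $n_i=\ell(\ms O_{B,v_i})$; write $[B']_s=\sum_j n_j'[V_j']$ likewise, with generic points $w_j'$. By the previous paragraph, each $V_j'$ maps onto some $V_{i(j)}$, and $\pi_*[V_j']=d_j[V_{i(j)}]$ with $d_j:=[k(w_j'):k(v_{i(j)})]<\infty$; moreover the indices $j$ with $i(j)=i$ are exactly those for which $w_j'$ lies over $v_i$. Hence the identity reduces to the numerical statement that, for every $i$, one has $\sum_{i(j)=i}n_j'\,d_j=\dl\,n_i$.

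This numerical statement I would prove by a local length count at $v_i$. Since $v_i\in U$, the finite $\ms O_{B,v_i}$-algebra $A':=(\pi_*\ms O_{B'})_{v_i}$ is free of rank $\dl$, so $\ell_{\ms O_{B,v_i}}(A')=\dl\,\ell(\ms O_{B,v_i})=\dl\,n_i$. On the other hand, $A'$ is Artinian, being finite over the Artinian ring $\ms O_{B,v_i}$, so it is the product of its localizations at its finitely many maximal ideals; those maximal ideals correspond to the points of $B'$ over $v_i$, namely the $w_j'$ with $i(j)=i$, and the corresponding localizations are the rings $\ms O_{B',w_j'}$. For each of these, the standard length formula gives $\ell_{\ms O_{B,v_i}}(\ms O_{B',w_j'})=\ell_{\ms O_{B',w_j'}}(\ms O_{B',w_j'})\cdot[k(w_j'):k(v_i)]=n_j'\,d_j$. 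Summing over the $j$ with $i(j)=i$ yields $\dl\,n_i=\ell_{\ms O_{B,v_i}}(A')=\sum_{i(j)=i}n_j'\,d_j$, as wanted; hence $\pi_*[B']_s=\dl\,[B]_s$.

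Finally, \eqref{eqlePF} follows. Both of its sides are honest cycles, since $\pi^*D$ being a genuine Cartier divisor forces $D$ to restrict to a genuine Cartier divisor on each $V_i$ (inspect local equations at $v_i$ and its preimages). Since $\pi$ is proper, the projection formula of \cite{Fu} gives $\pi_*(\pi^*D\cdot[B']_s)=D\cdot\pi_*[B']_s=\dl\,D\cdot[B]_s$; here Fulton's proof applies verbatim in the present Noetherian setting, as the introduction explains, and at the level of cycles the requisite form comes from pushing forward the Cartier divisors $(\pi^*D)|_{V_j'}$ under the finite dominant restrictions $\pi|V_j'$. I expect the main obstacle to be the length count of the third paragraph: pairing the points of $B'$ over $v_i$ with the generic points $w_j'$ of the components mapping onto $V_i$, decomposing the semilocal Artinian ring $A'$ as a product, and keeping track of residue-field degrees and lengths; it is elementary but, as the introduction warns, lengthy. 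A further point needing care is the verification that both sides of \eqref{eqlePF} are genuinely cycles rather than merely rational-equivalence classes.
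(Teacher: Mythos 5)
Your proof is correct, and its skeleton matches the paper's: first the qualitative statements (surjectivity, $s'=s$, each component of $B'$ mapping onto a component of $B$), then the reduction of \eqref{eqlePF} via Fulton's Prp.\,2.3(c) to the cycle identity $\pi_*[B']_s=\dl\,[B]_s$. Where you genuinely diverge is in proving that identity. The paper argues in two stages: first for $B$ reduced and irreducible, by computing $\dl=\sum_i n_i\dl_i$ as the dimension of $(\pi_*\ms O_{B'})_\eta$ over the field $\ms O_{B,\eta}$; then in general by establishing $[B']_s=\sum_j m_j[\pi^{-1}B_j]_s$ via Fulton's Lem.\,A.4.1 (multiplicativity of lengths under the flat local maps $\ms O_{B,\eta_j}\to\ms O_{B',\zeta_i}$) and applying the first stage to each $\pi^{-1}B_j\to B_j$, which is again finite of pure degree $\dl$. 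You instead make a single length count over the Artinian local ring $\ms O_{B,v_i}$ at each generic point of an $s$-dimensional component of $B$: freeness of $(\pi_*\ms O_{B'})_{v_i}$ of rank $\dl$ gives length $\dl\,n_i$, the product decomposition of this Artinian ring yields the local rings $\ms O_{B',w'_j}$ at the points over $v_i$, and the length-times-residue-degree formula gives $\sum_{i(j)=i} n'_j d_j$; equating the two counts is exactly the needed coefficient identity. This one-step computation subsumes both of the paper's stages, and as a bonus it shows automatically that every $s$-dimensional component of $B$ is dominated by one of $B'$ (since $\dl\,n_i>0$); the paper's route buys a cleaner modular structure, isolating a field-theoretic degree count and delegating the non-reduced bookkeeping to a quotable lemma of Fulton. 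One point you assert rather than argue is that the points of $B'$ over $v_i$ are precisely the generic points $w'_j$ of the $s$-dimensional components dominating $V_i$: this needs the observation that any point over $v_i$ has Artinian local ring (as $v_i\in U$ and $\pi$ is finite), hence is a minimal point whose component dominates $V_i$ and so has dimension $s$; this is routine and of the same kind the paper spells out, so it is a matter of detail, not a gap in the method.
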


\begin{proof}
 Since $\pi$ is of pure degree $\dl$, there's a dense open set $U$ of $B$
with $\pi^{-1}U$ dense in $B'$ and with $(\pi_*\ms O_{B'})|U$ a free
$\ms O_{U}$-module of pure positive rank.  Then $\pi$ restricts to a
surjection $\pi^{-1}U\onto U$.  So $U\subset \pi B'$.  But $\pi B'$ is
closed as $\pi$ is finite, and $U$ is dense in $B$.  So $\pi B'=B$.
Thus $\pi$ is surjective.  Thus, as $\pi$ is finite, $s'=s$.

Given a component $B'_1$ of $B'$, let $V$ be the complement in $B'_1$ of
the union of the other components.  Set $W:=V\cap \pi^{-1}U$.  Then $W$
is a nonempty open subset of $B'$, as $\pi^{-1}U$ is open and dense in
$B'$.  So $\pi W$ is nonempty; also, as $\pi^{-1}U\onto U$ is flat, $\pi
W$ is open in $U$, so in $B$.  But $\pi W\subset \pi B'_1$.  Moreover,
$\pi B'_1$ is closed, as $\pi$ is finite.  Thus $\pi B'_1$ is a
component of $B$.

As to Eqn.\,\eqref{eqlePF}, notice that Prp.\,2.3.\,(c) on p.\,34 of
Fulton's book \cite{Fu} yields $\pi_*(\pi^*D\cdot[B']_s)=
D\cdot\pi_*[B']_s$.  Thus it remains to prove $\pi_*([B']_s) =
\dl\,[B]_s$.

Recall that $[B']_s:= \sum_in_i[B'_i]$ where the $B'_i$ are all the
$s$-dimensional (irreducible) components of $B'$ with their reduced
structure and where $n_i$ is the length of the local ring $\ms
O_{B',\zeta_i}$ at the generic point $\zeta_i$ of $B'_i$.  So
$\pi_*[B']_s= \sum_in_i\pi_*[B'_i]$.  Also $\pi_*[B'_i]:=
\dl_i[\pi(B'_i)]$ where $\dl_i$ is the degree of the extension of the
function fields of $B_i$ over $\pi B'_i$.  Note that $\dl_i$ is finite,
that $\pi B'_i$ is closed, and that $\dim\pi B'_i= \dim B'_i$, all
because $\pi$ is finite.  But $\dim B'_i =s$ and $\dim B =s$.  Thus $\pi
B'_i$ is an $s$-dimensional component of $B$.

First, assume $B$ is reduced and irreducible.  Then $\pi_*[B']_s=
\sum_in_i\dl_i[B]$ by the above.  Let $\eta$ be the generic point of
$B$.  Then $\ms O_{B,\eta}$ is a field.  But $\pi$ is of pure degree
$\dl$.  So $(\pi_*\ms O_{B'})_\eta$ is a $\ms O_{B,\eta}$-vector space
of dimension $\dl$.  Thus $(\pi_*\ms O_{B'})_\eta$ is the product of all
the local rings $\ms O_{B',\zeta}$ for $\zeta\in \pi^{-1} \eta$;
also, $\sum_\zeta\dim_{\ms O_{B,\eta}}\ms O_{B',\zeta}=\dl$.

Given a $\zeta$, let $B'_\zeta$ be its closure.  Then $\pi B'_\zeta$ is
closed and contains $\eta$; so $\pi B'_\zeta= B$.  So $\dim B'_\zeta=
\dim B$.  But $\dim B =s$ and $\dim B'=s$.  Hence $B'_\zeta$ is an
$s$-dimensional component of $B'$.  Thus $\zeta=\zeta_i$ for some $i$.
Moreover, $n_i$ is the length of $\ms O_{B',\zeta_i}$ over itself, and
$\dl_i$ is the dimension of its residue field as a $\ms
O_{B,\eta}$-vector space.  Hence $\sum_in_i\dl_i=\dl$.  Thus
$\pi_*[B']_s=\dl\,[B]$ when $B$ is reduced and irreducible.

In general, $[B]_s=\sum m_j[B_j]$ where the $B_j$ are the
$s$-dimensional components of $B$ with their reduced structure and where
$m_j$ is the length of the local ring $\ms O_{B,\eta_j}$ at the
generic point $\eta_j$ of $B_j$.  Let's now prove $[B']_s= \sum
m_j[\pi^{-1}B_j]_s$.

Given $j$ and $\zeta\in \pi^{-1}\eta_j$, there's $i$ with
$\zeta_i=\zeta$ by the above.  But $\pi$ is of pure degree $\dl$.  So
$\ms O_{B',\zeta_i}$ is flat over $\ms O_{B,\eta_j}$.  Hence
$n_i=m_jl_i$ where $l_i$ is the length of $\ms O_{\pi^{-1}B_j,\zeta_i}$
over itself by Lem.\,A.4.1 on p.\,413 in \cite{Fu} (cf.\ Lem.\,1.7.1
on p.\,18).  Thus $[B'_i]$ appears with the same coefficient in both
$[B']_s$ and $\sum m_j [\pi^{-1}B_j]_s$; moreover, $[B'_i]$ is an
arbitrary component of this sum.  Conversely, if $[B'_i]$ is an
arbitrary component of $[B']_s$, then $\pi B'_i=B_j$ for some $j$ by
the above.  Thus $[B']_s= \sum m_j[\pi^{-1}B_j]_s$.

Finally, $\pi^{-1}B_j\to B_j$ is, for all $j$, plainly finite and of
pure degree $\dl$.  So by the first case, $\pi_*[\pi^{-1}B_j]_s=
\dl\,[B_j]$. Thus $\pi_*[B']_s=\sum m_j\dl\,[B_j]=\dl\,[B]_s$, as desired.
  \end{proof}

\begin{definition}\label{desRM}
 Let $R$ be a Noetherian ring, and $N$ a finitely generated module.
Following \cite[Dfn.\,0.1]{EHU}, define the {\it Rees algebra} $\mc
R(N)$ this way:
 $$\mc R(N):=\Sym(N)/L \tqn{where}
 L:=\bigcap\,\{\,\Ker(\Sym(u))\mid u\:N\to F \text{ with $F$ free}\,\}.$$

For each minimal prime $\go p$ of $R$, assume $N_\go p$ is a free $R_\go
p$-module of positive rank $r_\go p$, and set $d_\go p:=\dim(R/\go p)$.
Define the invariant $s(N)$ by the following formula:
 $$\ts s(N):= \max_\go p (d_\go p+r_\go p-1).$$

Finally, assume $R$ is local too, and let $M\subset N$ be a submodule.
Set $s:=s(N)$, set $P:= \Proj(\mc R(N))$, and set $\mb S:=[P]_s$.
Define the (relative) {\it Buchsbaum--Rim multiplicity\/} $e(M,\,N)$ to
be the number $e(\mb S)$ introduced in Sec.\,(5.1) in \cite{GTBR} with
$G':=\mc R(M)$ and $G:=\mc R(N)$; for details, see the beginning of the
proof of Thm.\,\ref{thMain}.
 \end{definition}

\begin{theorem}\label{thMain}
 Let $R$ be a Noetherian local ring of positive dimension, $M\subset N$
nested finitely generated modules, $R'$ a module-finite algebra of pure
degree $\dl$.  Assume $N$ is generically free of positive rank, and
$N/M$ is of finite length.  For every maximal ideal $\go m$ of $R'$, let
$\dl_\go m$ be the degree of the residue field extension of $R'_\go m$
over $R$, set $N'_\go m:=N\ox R'_\go m$, and let $M'_\go m\subset N'_\go
m$ be the $R'_\go m$-submodule generated by $M$.  Set $s:=s(N)$ and
$s_\go m:=s(N'_\go m)$ for all $\go m$.  Let $\Phi$ be the set of\/ $\go
m$ with $s_\go m=s$.  Then the Buchsbaum--Rim multiplicities satisfy
this relation:
 \begin{equation}\label{eqMain}
 \ts \dl\,e(M,\,N)=\sum_{\go m\in\Phi}\dl_\go m\,e(M'_\go m,\,N'_\go m).
 \end{equation}
\end{theorem}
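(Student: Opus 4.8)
The plan is to follow the ``simple in spirit'' strategy advertised in the introduction: realize both sides of \eqref{eqMain} as sums of intersection numbers on the relevant $\Proj$'s, then reduce the identity to the projection formula \eqref{eqlePF} together with the standard projection formula for Chern classes. First I would unwind Dfn.\,\ref{desRM}: set $G:=\mc R(N)$ and $G':=\mc R(M)$, let $P=\Proj(G)$ with structure sheaf quotient giving a surjection from a twist of $\ms O_P^{\,n}$, and recall that $e(\mb S)$ from Sec.\,(5.1) of \cite{GTBR} is, for $\mb S=[P]_s$, a sum over the $s$-dimensional components $P_i$ of $P$ of $n_i$ times an intersection number $\int_{P_i} c_1(\ms O(1))^{\,\dim P_i}\cdot(\text{correction terms from }G')$; concretely, it is a Buchsbaum--Rim-type integral of the form $\int c(\mc B)\cap[P]_s$ for a suitable tautological bundle $\mc B$ on $P$ built from the inclusion $M\subset N$. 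The point is that this integral is taken against the cycle $[P]_s$ and involves only Chern classes of bundles pulled back along the natural maps.

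Next I would set up the comparison geometry. The inclusion $R\to R'$ and the tensor operation $N\mapsto N\ox R'$ induce a finite map $P':=\Proj(\mc R(N\ox R'))\to P$; by Dfn.\,\ref{desRM} the target decomposes over the maximal ideals $\go m$ of $R'$, and $P'$ is correspondingly a disjoint union of the $P'_\go m:=\Proj(\mc R(N'_\go m))$. The key structural fact I need is that each $P'_\go m\to P$ (on the part dominating the $s$-dimensional locus, i.e.\ for $\go m\in\Phi$) is finite of pure degree $\dl_\go m$ in the sense of Dfn.\,\ref{dePd} — this follows because $N$ is generically free, so over a dense open $U\subset\Spec R$ the Rees algebra is a symmetric algebra and the construction commutes with the flat base change $R_f\to R'_f$, and the rank jump is exactly the residue-degree $\dl_\go m$ on each component. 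Crucially, the tautological bundle $\mc B'$ on $P'_\go m$ is the pullback of $\mc B$ from $P$, because $M'_\go m$ is generated by $M$ inside $N'_\go m=N\ox R'_\go m$; so the Chern-class data downstairs pulls back to the Chern-class data upstairs.

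Now the proof runs: $\sum_{\go m\in\Phi}\dl_\go m\,e(M'_\go m,N'_\go m)$ is a sum of intersection numbers $\int_{P'_\go m}(\text{polynomial in }c_1(\ms O(1)),c(\mc B'))\cap[P'_\go m]_s$, weighted by $\dl_\go m$. The integrand is $\pi^*$ of the corresponding class on $P$ (using $\mc B'=\pi^*\mc B$ and $\ms O_{P'}(1)=\pi^*\ms O_P(1)$); by the projection formula for Chern classes we may push the class down, leaving $\int_P(\cdots)\cap\pi_*\bigl((\pi^*D_1)\cdots(\pi^*D_k)\cdot[P']_s\bigr)$ for the divisors $D_j$ representing $c_1(\ms O_P(1))$ and the Chern roots. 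Iterating \eqref{eqlePF} — or rather applying it once and using Prp.\,2.3.\,(c) of \cite{Fu} for the remaining factors as in the proof of Lem.\,\ref{lePF} — converts $\pi_*\bigl((\pi^*D_1)\cdots\cdot[P']_s\bigr)$ into $\dl\,(D_1\cdots)\cdot[P]_s$. Summing, the left side becomes $\dl\int_P(\cdots)\cap[P]_s=\dl\,e(M,N)$, which is \eqref{eqMain}. One must also check that the components $P'_\go m$ with $\go m\notin\Phi$ contribute nothing: there $s_\go m<s$, so $[P'_\go m]_s=0$ and the corresponding term is zero — which is why the sum is only over $\Phi$.

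The main obstacle, as the introduction warns (``the devil is in the details''), is the bookkeeping in the second step: verifying that $\Proj(\mc R(N\ox R'))$ really is $\coprod_\go m P'_\go m$, that the bad components genuinely have dimension $<s$, that $\pi$ restricted to the good locus is of pure degree $\dl_\go m$ (this uses the generic freeness of $N$ and that $R'$ is of pure degree $\dl$ over $R$, together with $\dl=\sum_{\go m}\dl_\go m\,[\text{local rank}]$, which needs a prime-avoidance argument as in Dfn.\,\ref{dePd}), and that the tautological bundle genuinely pulls back — the last point hinges on $\mc R(M\ox R')$ or the relevant image algebra agreeing with $\mc R(M'_\go m)$, which is where the hypothesis ``$N$ generically free'' and ``$N/M$ of finite length'' do real work. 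Once this dictionary is in place, the homological input is just \eqref{eqlePF} and the Chern-class projection formula, exactly as in Lem.\,\ref{lePF}.
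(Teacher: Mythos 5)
Your overall slogan (express both sides as intersection numbers, then use the cycle projection formula \eqref{eqlePF} plus the Chern-class projection formula) is indeed the paper's strategy, but your accounting of where the two kinds of degree enter is wrong, and this is a genuine gap, not a detail. You assert that each map $P'_{\go m}\to P$ is finite of pure degree $\dl_{\go m}$ and that an identity of the shape $\dl=\sum_{\go m}\dl_{\go m}\,(\text{local rank})$ ties things together. But $\dl_{\go m}$ is, by hypothesis, the degree of the \emph{residue field} extension of $R'_{\go m}$ over $R$\emdash a datum at the closed point\emdash whereas pure degree in the sense of Dfn.\,\ref{dePd} is a generic datum; the two generally differ. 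For instance, let $R$ be a complete discrete valuation ring and $R'$ its integral closure in a totally ramified extension of degree $\dl>1$: then $R'$ is local, free of rank $\dl$ over $R$ (so of pure degree $\dl$), yet $\dl_{\go m}=1$, and no map in sight has pure degree $\dl_{\go m}$; your bookkeeping would produce the wrong constant in \eqref{eqMain}. In the paper's proof the two degrees enter by different mechanisms: a \emph{single} finite comparison map $\pi\:B^\dg\to B$, built from the whole semilocal ring $R'$ (not from one $R'_{\go m}$), has pure degree $\dl$, and Lem.\,\ref{lePF} together with the Chern-class projection formula gives $\pi_*\Sigma^\dg=\dl\,\Sigma$, i.e.\ the factor $\dl$; the factors $\dl_{\go m}$ appear only because the multiplicities are degrees over fat points, $e(M'_{\go m},N'_{\go m})=\int_{D_{\go m}/Y_{\go m}}\Sigma_{\go m}$, and pushing the point class $[x_{\go m}]$ down to $Y$ yields $\dl_{\go m}[x]$, whence $\int_{D_{\go m}/Y}\Sigma_{\go m}=\dl_{\go m}\,e(M'_{\go m},N'_{\go m})$. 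Your proposal collapses these two mechanisms into one and ignores the change of base point in the integration, which is exactly where $\dl_{\go m}$ lives.

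There are two further structural problems. First, you integrate polynomials in Chern classes against $[P]_s$ on $P=\Proj(\mc R(N))$ itself; but $P$ is proper over $\Spec R$, not over a point, so those integrals are undefined. The definition (Dfn.\,\ref{desRM}, following \cite{GTBR}) uses the blow-up $B$ of $P$ along $Z=\bb V(M\mc R(N))$, its exceptional divisor $D$, which is proper over the fat point $Y=\Supp(N/M)$, the classes $\mu=c_1\ms O_P(1)$ and $\nu=c_1\ms O_Q(1)$ with $Q=\Proj(\mc R(M))$, and Eqn.\,\eqref{eqDfnBR}; accordingly Lem.\,\ref{lePF} must be applied to the honest Cartier divisor $D$ along the finite map of blow-ups $B^\dg\to B$, not to divisors ``representing'' $c_1\ms O_P(1)$. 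Second, your dictionary presumes a finite map $\Proj(\mc R(N'))\to P$ along which all tautological data pull back; but the Rees algebra does not commute with the non-flat base change $R\to R'$, so the paper must interpose the image algebra $\mc R^\dg:=\Im(\Sym(u'))$, prove $\mc R(N')_f\risom\mc R^\dg_f$ and $\mc R(N)\ox R'_f\risom\mc R^\dg_f$, and check that $\Sigma'=\Sigma^\dg$, so that the class is insensitive to the discrepancy; this is most of the ``devil in the details,'' and your proposal leaves it as an assertion (likewise, $P'$ is not literally the disjoint union of the $P'_{\go m}$\emdash only the computation of the degree over $Y$ decomposes by closed points). With these repairs the argument becomes the paper's; as written, it does not assemble into a proof.
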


\begin{proof}
 Let's recall the details of the definition of $e(M,\,N)$.
Figure~\ref{f1g1} shows the schemes involved and the canonical maps
relating them (just the top square is Cartesian).%
 \begin{figure}[!ht]
\dgARROWLENGTH=0.5\dgARROWLENGTH
\begin{minipage}[c]{0.35\textwidth}
\[\begin{diagram}
\node[3]{D}\arrow[3]{e}\arrow{sww,J}\node[3]{Z}\arrow{sww,J}\arrow[2]{s}\\
\node{B}\arrow[2]{s}\arrow[3]{e}\node[3]{P}\arrow[2]{s}\\ 
\node[6]{Y\rlap{$\ni x$}}\arrow{sww,L}\\
\node{Q}\arrow[3]{e}\node[3]{X}
\end{diagram}\]
\end{minipage}
\begin{minipage}[c]{0.6\textwidth}
\begin{align*}
B:=&\text{Bl}(Z,\,P)&
   P:=&\Proj(\mc R(N))\\
Q:=&\Proj(\mc R(M))&
   X:=&\Spec(R)\\
D:=&\text{Exceptional divisor}&
   Z:=&\bb V(M\mc R(N))\\
x:=&\text{Central point}&
   Y:=&\Supp(N/M)
\end{align*}
\end{minipage}%
\caption{The schemes involved in the definition of $e(M,\,N)$}\label{f1g1}
\end{figure}
  Here, $Z$ is the subscheme defined by the ideal $M\mc R(N)$, and $B$
is the blow-up of $P$ along $Z$.  Note that $D$ is proper over $Y$ and
that $Y$ is supported on the central point (that is, the closed point)
$x\in X$.

Form the intersection class $\Sigma$ on $D$ defined as follows:
 $$\ts \Sigma:=\sum_{i=1}^s\mu^{i-1}\nu^{s-i}\cap(D\cdot[B]_s)
 \text{\quad where\enspace}  \mu:=c_1\ms O_P(1)
 \text{\enspace and\enspace} \nu:=c_1\ms O_Q(1).$$
Then $e(M,\,N)$ is defined as the corresponding intersection number:
\begin{equation}\label{eqDfnBR}\ts
 e(M,\,N) := \int_{D/Y}\Sigma.
\end{equation}
 Thus $e(M,\,N)$ is the coefficient of $[x]$ in the projection of
$\Sigma$ from $D$ to $Y$.

 Let's identify the components of $P$ and $B$.  Given a component $X_1$
of $X$, let $\eta$ be its generic point, and $K$ the residue field of
$\ms O_\eta$.  Then the fiber $P_\eta$ is just $\Proj(\mc R(N)\ox_R K)$.
But $\mc R(N)\ox_R K=\mc R(N)\ox_R\ms O_\eta\ox_{\ms O_\eta}K$.  Set
$N_\eta:=N\ox_R\ms O_\eta$.  By \cite[Prp.\,1.3]{EHU}, forming a Rees
algebra commutes with flat base change; hence, $\mc R(N)\ox_R\ms O_\eta
=\mc R(N_\eta)$.  But $N_\eta$ is a free $O_\eta$-module; so plainly
$\mc R(N_\eta)= \Sym(N_\eta)$.  But $\Sym(N_\eta)\ox_{O_\eta} K =
\Sym(N_\eta\ox_{O_\eta}K)$.  Also $N_\eta\ox_{O_\eta}K=N\ox_RK$.
Therefore, $\mc R(N)\ox_R K=\Sym(N\ox_R K)$.  Set $r_1:=\rank(N_\eta)$,
so $r_1>0$.  Thus $P_\eta=\bb P_K^{r_1-1}$.

So $P_\eta$ is irreducible and nonempty. So $P_\eta$ lies in a component
$P_1$ of $P$.  Note $P_1$ projects onto a closed, irreducible subset $W$
containing $\eta$; so $W=X_1$.  Moreover, owing to
\cite[Lem.\,(3.1)]{GTBR}, given any proper map $S\to T$ between
irreducible Noetherian schemes, if the generic fiber is of dimension
$u$, then $\dim S =\dim T+u$.  Thus $\dim P_1 = \dim X_1 +r_1-1$.

Furthermore, $P_1$ corresponds to a minimal prime of $\mc R(N)$.  But
contraction sets up a bijection from the minimal primes of $\mc R(N)$ to
those of $R$ by \cite[Prp.\,1.5]{EHU}.  Thus projection sets up a
bijection from the components $P_i$ of $P$ to those $X_i$ of $X$.  Also
$\dim P := \max_i(\dim P_i) = s$.

The blow-up map $B\to P$ restricts to an isomorphism $B-D\risom P-Z$,
and $D$ is nowhere dense in $B$.  But $Z$ maps into $Y$, and $Y$ is
supported on $x\in X$.  Also each component $P_i$ of $P$ maps onto a
component of $X$; so $P_i$ does not map into $Y$, as $Y$ cannot be a
component of $X$ since $R$ is a local ring of positive dimension.  Hence
$Z$ is nowhere dense in $P$.  Thus projection sets up a bijection from
the components $B_i$ of $B$ to those $P_i$ of $P$, and each map $B_i\to
P_i$ is proper and birational.

So \cite[Lem.\,(3.1)]{GTBR} yields $\dim B_i=\dim P_i$.  Thus $\dim
B=s$.

For each maximal ideal $\go m$ of $R'$, an analogous setup arises from
the local ring $R'_\go m$ and its nested modules $M'_\go m\subset N'_\go
m$.  Denote the corresponding objects with a subscript of $\go m$.
Below, we check that $N'_\go m$ is generically free of positive rank.
Thus $e(M'_\go m,\,N'_\go m)$ is the coefficient of $[x_\go m]$ in the
projection of $\Sigma_\go m$ from $D_\go m$ to $Y_\go m$.  Moreover,
projecting $[x_\go m]$ to $Y$ yields $\dl_\go m[x]$.  Thus $\int_{D_\go
m/Y}\Sigma_\go m=\dl_\go m\,e(M'_\go m,\,N'_\go m)$.

Another analogous setup arises from the semilocal ring $R'$ and its
nested modules $M'\subset N'$ where $N':=N\ox R'$ and where $M'$ is
generated by $M$.  Denote the corresponding objects with a prime
($\prime$).  Below, we check that $N'$ is generically free of positive
rank.  Since forming a Rees algebra, forming a blowup, and so on commute
with flat base change, localizing this setup at $\go m$ yields the
preceding setup.

As above, $s_\go m=\dim P_\go m$ and $s'=\dim P'$.  But dimensions are
measured at closed points.  Moreover, each closed point of $P'_\go m$ is
a closed point of $P'$, and conversely each closed point of $P'$ is a
closed point of $P'_\go m$ for some $\go m$.  Thus $s_\go m\le s'$ for
each $\go m$, and $s_\go m=s'$ for some $\go m$.

Below, we check that $s'=s$.  Thus $\int_{D'/Y}\Sigma'= \sum_{\go
m\in\Phi}\dl_\go m\,e(M'_\go m,\,N'_\go m)$.

Let's now check that $N'$ and each $N'_\go m$ are generically free of
positive rank and that $s'=s$.  As $R'$ is of pure degree $\dl$, there's
$f\in R$ lying in no minimal prime of $R$ and in no contraction $\go p$
of a minimal prime $\go p'$ of $R'$; also $R'_f$ is a free $R_f$-module
of pure positive rank.  Given any $\go p'$, note that $\go p'R'_f$ is a
minimal prime.  But $R'_f$ is $R$-flat, so has the Going-down Property.
Hence $\go p\subset R$ is minimal.  So $N_\go p$ is free of positive
rank $r_\go p$.  But $N':=N\ox R'$. Thus $N'_{\go p'}$ is free of rank
$r_{\go p'}$ equal to $r_\go p$.

Each $\go m$ contains one or more $\go p'$.  Thus $N'_\go m$
is generically free of positive rank.

Finally, $R'/\go p'$ is a module-finite overdomain of $R/\go p$; so
$d_{\go p'}:=\dim (R'/\go p')$ is equal to $d_\go p:=\dim(R/\go p)$.
But every minimal prime $\go p$ of $R$ is the contraction of some $\go
p'$, because $\go pR_f$ is a minimal prime and $R'_f$ is a free
$R_f$-module of pure positive rank.  Since $d_{\go p'}=d_\go p$ and
$r_{\go p'} = r_\go p$, it follows that $s'=s$.

Yet another analogous setup arises from replacing $\mc R(N')$ by any
homogeneous quotient $\mc R^\dg$ with $\smash{\mc R(N')_f\risom\mc
R^\dg_f}$.  Denote the corresponding objects with a dagger ($\dg$).
There's no $N^\dg$ and no $Y^\dg$, but they're not needed.  Take
$M^\dg:=M'$ and $Z^\dg:=\bb V(M\mc R^\dg)$ and $s^\dg:=\dim P^\dg$.
Then $P^\dg$, $B^\dg$, and so on are closed subschemes of $P'$, $B'$,
and so on; in fact, $Z^\dg=Z'\cap P^\dg$ and $D^\dg=D'\cap B^\dg$.
Also, $\mu^\dg$ and $\nu^\dg$ are the pullbacks of $\mu'$ and $\nu'$.

Moreover, contraction sets up a bijection from the minimal primes of
$\mc R(N')$ to those of $R'$ by \cite[Prp.\,1.5]{EHU}.  Hence the
principal open set $P'_f\subset P'$ is (topologically) dense.  But
$P'_f=P^\dg_f\subset P^\dg\subset P'$.  So $\smash{P^\dg_f\subset
P^\dg}$ is dense too.  So $P'$ and $P^\dg$ have the same reductions.
Thus $s^\dg=s'$. But $s'=s$.  Thus $s^\dg=s'=s$.

The principal open subsets $B'_f\subset B'$ and $B^\dg_f\subset B^\dg$
are the preimages of $P'_f$ and $\smash{P^\dg_f}$.  But $B'\to P'$ and
$B^\dg\to P^\dg$ are birational.  Hence $B'_f$ is dense in $B'$, and
$B^\dg_f $ is dense in $B^\dg$.  Moreover, as schemes, $P'_f=P^\dg_f$
and $Z'_f=Z^\dg_f$, so $B'_f=B^\dg_f$.  Hence $[B']_s=[B^\dg]_s$.  Thus
$\Sigma'=\Sigma^\dg$ as classes on $D'$.

Choose $\mc R^\dg$ as follows.  Take any map $u\:N\to F$ with $F$ free
of finite rank such that the dual map $u^*\:F^*\to N^*$ is surjective;
for example, take $u$ to be the composition of the canonical map $N\to
N^{**}$ with the dual of any surjection from a free module of finite
rank onto $N^*$.  Set $u':=u\ox R'$ and $\mc R^\dg := \Im(\Sym(u'))$.

Then $\mc R^\dg$ is a homogeneous quotient of $\mc R(N')$, because
$u'\:N'\to F'$ is one of the maps from $N'$ to a free module involved in
the definition of $\mc R(N')$.  Since $u^*\:F^*\to N^*$ is surjective,
so is $u^*\ox R'_f$.  However, forming the dual commutes with flat base
change.  So $u^*\ox R'_f$ is the dual of $u\ox R'_f$.  So
\cite[Prp.\,1.3]{EHU} yields $\mc R(N\ox_R R'_f)= \Im(\Sym(u\ox_RR'_f))$.
 Note that $N\ox_RR'_f=N'\ox_{R'}R'_f$ and that
$u\ox_RR'_f=u'\ox_{R'}R'_f$.  But $R'_f$ is flat over $R'$.  Hence $\mc
R(N\ox_R R'_f)= \mc R(N')\ox_{R'}R'_f$ by \cite[Prp.\,1.3]{EHU}, and
$\Im(\Sym(u\ox_RR'_f))= \Im(\Sym(u'))\ox_{R'}R'_f$.  Thus $\smash{\mc
R(N')_f\risom\mc R^\dg_f}$.

However, $R'_f$ is free over $R_f$, and $R_f$ is flat over $R$; so
$R'_f$ is flat over $R$.  So $\mc R(N\ox_R R'_f)= \mc R(N)\ox_RR'_f$ by
\cite[Prp.\,1.3]{EHU}.  Thus $\smash{\mc R(N)\ox R'_f\risom \mc
R^\dg_f}$.

Moreover, as $u^*\:F^*\to N^*$ is surjective, \cite[Prp.\,1.3]{EHU}
yields $\Im(\Sym(u))=\mc R(N)$ too.  Let $\vf\:\mc R(N)\into \Sym(F)$
denote the inclusion.  Then $\Sym(u')$ factors into a surjection
$\Sym(N')\onto R(N)\ox R'$ followed by $\vf\ox R'\:\mc R(N)\ox R'\to
\Sym(F')$.  So $\Im(\Sym(u'))=\Im(\vf\ox R')$.  But $\mc R^\dg :=
\Im(\Sym(u'))$.  Also $\vf\ox R'_f$ is injective as $R'_f$ is flat over
$R$.  Thus there's a canonical surjection $\mc R(N)\ox R'\onto \mc
R^\dg$, which induces an isomorphism $\smash{\mc R(N)\ox R'_f\risom \mc
R^\dg_f}$.

So $P^\dg$ is a closed subscheme of $P\x_XX'$, and $P^\dg_f= P\x_XX'_f$.
Furthermore, $Z^\dg=(Z\x_XX')\cap P^\dg$.  Thus $B^\dg$ is a closed
subscheme of $B\x_XX'$, and $D^\dg=D'\cap B^\dg$.  Moreover, $X'_f$ is
flat over $X$, and blowup commutes with flat base change.  Thus
$B_f\x_{X_f}X'_f= B^\dg_f$.

Let $\pi\:B^\dg\to B$ be the composition of the inclusion $B^\dg\into
B\x_XX'$ followed by the projection $B\x_XX'\to B$.  As $X'\to X$ is
finite, so is $\pi$.  Further, $\smash{\pi^{-1}B_f=B^\dg_f}$ and
$B^\dg_f$ is dense in $B^\dg$.  Also, $X_f$ is dense in $X$, and
projection sets up a bijection from the components of $B$ to those of
$P$, so to those of $X$; hence, $B_f$ is dense in $B$.  Finally, $R'_f$
is a free $R_f$-module of pure rank $\dl$; hence, $(\pi_*\mc
O_{B^\dg})|B_f$ is a free $\ms O_{B_f}$-module of pure rank $\dl$.  Thus
$\pi$ is of pure degree $\dl$.

Plainly $D^\dg= \pi^{-1}D$.  Thus Lem.\,\ref{lePF}  yields
$\pi_*\bigl(D^\dg \cdot [B^\dg]_s\bigr)=\dl\,D\cdot[B]_s$.

Plainly $\mu^\dg = \pi^*\mu$ and $\nu^\dg = \pi^*\nu$.  So the
projection formula for Chern classes, discussed on p.\,53 of Fulton's
book \cite{Fu} yields, for any $i,\,j\ge0$,
 $$\pi_*\bigl({\mu^\dg}^i{\nu^\dg}^j\cap(D^\dg \cdot [B^\dg]_s)\bigr)
 = \mu^i\nu^j\cap\pi_*\bigl(D^\dg \cdot [B^\dg]_s\bigr).$$
 Hence $\pi_*\Sigma^\dg = \dl\,\Sigma$.  So $\int_{D^\dg/Y}\Sigma^\dg =
\dl\,\int_{D/Y}\Sigma= \dl\,e(M,\,N)$.  But we proved
$\Sigma^\dg=\Sigma'$ on $D'$ and $\int_{D'/Y}\Sigma'=\sum_{\go
m\in\Phi}\dl_\go m\,e(M'_\go m,\,N'_\go m)$.  Thus
Eqn.\,\eqref{eqMain} is proved.
 \end{proof}

\begin{remark}\label{reSthy}
 In Complex Analytic Singularity Theory, $N$ arises as a submodule of a
free module $F$, and $N$ is generically nonzero.  Moreover, usually, $R$
is reduced.  Then automatically, $N$ is torsion-free and generically
free of positive rank.  Thus by \cite[Thm.\,1.6]{EHU}, $\mc R(N)$
is just the $R$-subalgebra $\mc R$ of $\Sym(F)$ generated by $N$.

So $\mc R(N)$ has no $R$-torsion, Also the canonical surjection
$\Sym(N)\onto \mc R(N)$ is generically bijective as $N$ is generically
free of positive rank.  Thus $\mc R(N)$ is equal to the quotient of
$\Sym(N)$ by its $R$-torsion.

More generally, $R$ may be just generically reduced.  So still $N$ is
generically free of positive rank.  Although $N$ need not be
torsion-free, nevertheless the inclusion $N\into F$ factors through the
canonical map $N\to N^{**}$, so the latter is injective; hence, by
\cite[Prp.\,1.8]{EHU}, the kernel of the canonical surjection $\mc
R(N)\onto \mc R$ is nilpotent.  Also this surjection is generically an
isomorphism by the preceding argument.  Thus owing to the argument in
the proof of Thm.\,\ref{thMain} used to compare $\mc R(N')$ and $\mc
R^\dg$, we may use $\mc R$ in place of $\mc R(N)$ to express $e(M,N)$ as
an intersection number.

Given a module-finite $R$-algebra $R'$ of pure degree, the map $N\ox
R'\to F\ox R'$ needn't be injective, although it's generically so.  Let
$N''$ be the image of $N$, and $M''$ the submodule generated by $M$.
Then $\mc R(N\ox R')\to \mc R(N'')$ is surjective by \cite[p.\,702,
mid.]{EHU}, and generically an isomophism.  Thus, the same argument in
the proof of Thm.\,\ref{thMain} yields $e(M''_\go m,\,N''_\go m) =
e(M'_\go m,\,N'_\go m)$ for all $\go m$ in \eqref{eqMain}.
 \end{remark}

\begin{proposition}\label{prAF}
 Let $R$ be a Noetherian local ring of positive dimension, $M\subset N$
nested finitely generated modules with $M/N$ of finite length. For each
minimal prime $\go p$ of $R$, assume $N_\go p$ is a free $R_\go
p$-module of positive rank $r_\go p$, set $N(\go p):=N/\go pN$, let
$M(\go p)$ be the image of $M$ in $N(\go p)$, and set $d_\go
p:=\dim(R/\go p)$ and $\ell_\go p:= \length(R_\go p)$.  Set $s:=s(N)$,
and let $\Lambda$ be the set of\/ $\go p$ with $d_\go p+r_\go p-1=s$.
Then
 \begin{equation}\label{eqAF}
 \ts e(M,\,N)=\sum_{\go p\in\Lambda}\ell_\go p\,e(M(\go p),\,N(\go p)).
 \end{equation}
 \end{proposition}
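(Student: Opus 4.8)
The plan is to begin as in the proof of Thm~\ref{thMain} and then, in place of the projection formula \eqref{eqlePF}, simply to split the fundamental cycle $[B]_s$ into its components. Recall that $e(M,N)=\int_{D/Y}\Sigma$ with $\Sigma=\sum_{i=1}^s\mu^{i-1}\nu^{s-i}\cap(D\cdot[B]_s)$, where $P,\,Z,\,B,\,D,\,Y$ and $\mu,\,\nu$ are formed from $R,\,M,\,N$ as in Fig.~\ref{f1g1}, and that, by the analysis in that proof, projection gives compatible bijections between the components $B_i$ of $B$, the components $P_i$ of $P$, and the components $X_i=\bb V(\go p_i)$ of $X=\Spec R$, with each $B_i\to P_i$ proper and birational and with $\dim B_i=\dim P_i=d_{\go p_i}+r_{\go p_i}-1$. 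Hence $\dim B=s$, the $s$-dimensional components of $B$ are precisely the $B_\go p$ with $\go p\in\Lambda$, and $[B]_s=\sum_{\go p\in\Lambda}m_\go p[B_\go p]$ with each $B_\go p$ taken reduced.

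First I would show $m_\go p=\ell_\go p$. As $B_\go p\to P_\go p$ is birational, $m_\go p$ equals the length of $\ms O_{P,\zeta}$ at the generic point $\zeta$ of $P_\go p$; and $\zeta$ lies over the generic point of $X_\go p$, so, as recalled in the proof of Thm~\ref{thMain}, $\ms O_{P,\zeta}$ is a homogeneous localization of $\mc R(N)\ox_R R_\go p=\mc R(N_\go p)=\Sym_{R_\go p}(N_\go p)$, the last step because $N_\go p$ is free. Thus $\ms O_{P,\zeta}$ is the local ring of $\bb P^{\,r_\go p-1}_{R_\go p}$ at its generic point, so it is flat over the Artinian ring $R_\go p$ with $\ms O_{P,\zeta}\ox_{R_\go p}\kappa(\go p)$ a field. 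So $m_\go p=\length(R_\go p)\cdot1=\ell_\go p$ by Lem.\,A.4.1 on p.\,413 in \cite{Fu}.

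Next, for each $\go p\in\Lambda$ I would form the analogous setup over $R/\go p$ from $M(\go p)\subset N(\go p)$. Here $N(\go p)$ is generically free of rank $r_\go p>0$ and $N(\go p)/M(\go p)$, being a quotient of $N/M$, has finite length, so $e(M(\go p),N(\go p))=\int_{D(\go p)/Y(\go p)}\Sigma(\go p)$ is defined and $s(N(\go p))=d_\go p+r_\go p-1=s$. As in Rmk~\ref{reSthy}, by \cite{EHU} the surjections $N\onto N(\go p)$ and $M\onto M(\go p)$ induce canonical homogeneous surjections $\mc R(N)\onto\mc R(N(\go p))$ and $\mc R(M)\onto\mc R(M(\go p))$, hence closed immersions $P(\go p)\into P$ and $Q(\go p)\into Q$ compatibly. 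Since $R/\go p$ is a domain, $\mc R(N(\go p))$ is reduced with a single minimal prime, so $P(\go p)$ is integral of dimension $d_\go p+r_\go p-1=\dim P_\go p$; as it dominates $X_\go p$ and $P_\go p$ is the only component of $P$ that does, $P(\go p)=(P_\go p)_\red$. Because $M\mc R(N)$ maps onto $M(\go p)\mc R(N(\go p))$, we get $Z(\go p)=Z\cap P(\go p)$, so $B(\go p)=\text{Bl}(Z(\go p),P(\go p))$ is the strict transform of $P(\go p)$ in $B$; being integral, birational onto $P_\go p$, and not contained in $D$, it equals $(B_\go p)_\red$, and under this identification $D(\go p)=D|_{B(\go p)}$ while $\mu(\go p),\,\nu(\go p)$ are the restrictions of $\mu,\,\nu$. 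With $\iota\:B(\go p)\into B$ the resulting closed immersion (carrying $D(\go p)$ into $D$), the projection formulas for a Cartier divisor (Prp.\,2.3.\,(c) on p.\,34 of \cite{Fu}) and for Chern classes (p.\,53 of \cite{Fu}), together with $[B(\go p)]_s=[B_\go p]$ ($B(\go p)$ being integral of dimension $s$), give $\iota_*\Sigma(\go p)=\sum_{i=1}^s\mu^{i-1}\nu^{s-i}\cap(D\cdot[B_\go p])$ as $0$-cycle classes on $D$.

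To conclude, since $Y$ and $Y(\go p)$ are both supported at the closed point $x$ with residue field $R/\go m$, functoriality of proper pushforward gives $\int_{D/Y}(\iota_*\Sigma(\go p))=\int_{D(\go p)/Y(\go p)}\Sigma(\go p)=e(M(\go p),N(\go p))$, so
\begin{align*}
 e(M,N)=\int_{D/Y}\Sigma
 &=\sum_{\go p\in\Lambda}m_\go p\int_{D/Y}\Bigl(\ts\sum_{i=1}^s\mu^{i-1}\nu^{s-i}\cap(D\cdot[B_\go p])\Bigr)\\
 &=\sum_{\go p\in\Lambda}\ell_\go p\,e(M(\go p),N(\go p)),
\end{align*}
which is \eqref{eqAF}. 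I expect the main obstacle to be the geometric comparison in the previous paragraph: verifying that restricting the whole $R$-setup along $P(\go p)\into P$ reproduces the $R/\go p$-setup, so that $B(\go p)=(B_\go p)_\red$ compatibly with $D$, $\mu$, and $\nu$. This plays the role here of the comparison of the dagger and prime setups in the proof of Thm~\ref{thMain}; the bookkeeping is elementary but needs care.
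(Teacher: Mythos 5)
Your route is the paper's own: split $[B]_s$ into its $s$-dimensional components $B_\go p$, $\go p\in\Lambda$, show each coefficient is $\ell_\go p$, and identify the $R/\go p$-setup for $(M(\go p),N(\go p))$ with the reduced component inside the big setup, so that its contribution to $\int_{D/Y}\Sigma$ is $e(M(\go p),N(\go p))$. Your computation of $m_\go p$ (flatness of the generic-point local ring of $\Proj\Sym(N_\go p)$ over the Artinian ring $R_\go p$, plus Lem.\,A.4.1 of \cite{Fu}) is a perfectly good variant of the paper's localization at a suitable $g\notin\go p$ with $N_g$ free; only note that, since $m_\go p$ is a length in $\ms O_{B,\zeta}$ rather than in $\ms O_{B_\go p,\zeta}$, what you need is not merely that $B_\go p\to P_\go p$ is birational but that $B$ and $P$ have the same local rings at these generic points, which holds because they lie off $D$ and $Z$.

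The one genuine gap is the step you yourself flag: the assertion, credited to \cite{EHU}, that $N\onto N(\go p)$ induces a canonical surjection $\mc R(N)\onto\mc R(N(\go p))$. Here $\mc R(N(\go p))$ must be the Rees algebra over the new base ring $R/\go p$ (that is what the setup defining $e(M(\go p),N(\go p))$ uses), and Rees algebras do not commute with non-flat base change; the surjectivity statement in \cite{EHU} concerns surjections of modules over a fixed ring. Formally, a map from $N(\go p)$ to a free $R/\go p$-module need not come from a map of $N$ to a free $R$-module, so the inclusion of kernels needed to descend $\Sym(N)\onto\Sym(N(\go p))$ to the Rees algebras is not automatic. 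This is precisely what the paper's proof supplies: choosing a versal $u\:N\to F$ with $\Im(\Sym(u))=\mc R(N)$, it observes $\go P=\mc R(N)\cap\go p\Sym(F)$, obtains a canonical surjection in the opposite direction $\mc R(N(\go p))\onto\mc R(N)/\go P$ (well defined exactly because $\mc R(N)/\go P$ embeds in $\Sym(F/\go pF)$ with $F/\go pF$ free over $R/\go p$), checks it becomes bijective after inverting $g$, and uses that $\mc R(N(\go p))$ is a domain to conclude $\mc R(N(\go p))\risom\mc R(N)/\go P$. With your citation replaced by that argument (which also yields $P(\go p)=(P_\go p)_\red$ directly, without the dimension-and-domination detour), the rest of your bookkeeping\emdash$Z(\go p)=Z\cap P(\go p)$, the strict-transform identification $B(\go p)=B_\go p$, $D(\go p)=D\cap B(\go p)$, pullback of $\mu,\nu$, and pushforward to the common closed point\emdash is exactly what the paper does, so your proof closes up into the paper's.
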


\begin{proof}
 Start as in the proof of Thm.\,\ref{thMain}.  By definition, $[B]_s:=
\sum_jm_j[B_j]$ where the $B_j$ are the $s$-dimensional components of
$B$ with their reduced structure and $m_j$ is the length of the local
ring of $B_j$ at its generic point.  Recall that projection sets up a
bijection from the components of $B$ to those of $P$ and from those of
$P$ to those of $X$.  For each $j$, let's identify the image of $B_j$ in
$X$ and the value of $m_j$.

Given a minimal prime $\go P$ of $\mc R(N)$, set $\go p:=\go P\cap R$.
Then $\go p$ is a minimal prime of $R$.  Let $\go n$ be the maximal
ideal of $R$.  Then there's $g\in \go n-\go p$ such that $N_g$ is a free
$R_g$-module of rank $r_\go p$.  Hence $P_g = \Proj(\Sym(N_g))$.  Hence
the length of the local ring of $P_g$ at the (generic) point
corresponding to $\go P$ is just $\ell_\go p$.  But $B_g\risom P_g$ as
$g\notin\go n$.  Thus each $B_j$ maps onto a component of $X$ defined by
a minimal prime $\go p$ of $R$, and $\go p\in \Lambda$ because $\dim B_j =
d_\go p+r_\go p-1$; also, the correspondence $B_j\mapsto \go p$, from
$\{B_j\}$ to $\Lambda$, is bijective, and $m_j=\ell_\go p$.

Given $\go p\in\Lambda$, let $B_\go p$ stand for the corresponding $B_j$.
Then Eqn.\,\eqref{eqDfnBR} yields
 $$\ts e(M,\,N) = \sum_{\go p\in\Lambda}\ell_\go p\,
   \int_{D/Y}\sum_{i=1}^s\mu^{i-1}\nu^{s-i}\cap(D\cdot[B_\go p]).$$
 Thus it remains to prove the following formula for each $\go p\in
\Lambda$:
 \begin{equation}\label{eqep}\ts
 \int_{D/Y}\sum_{i=1}^s\mu^{i-1}\nu^{s-i}\cap(D\cdot[B_\go p])
  =e(M(\go p),\,N(\go p)).
 \end{equation}

Fix any minimal prime $\go P$ of $\mc R(N)$, and set $\go p:=\go P\cap
R$.  Let's prove that $\mc R(N(\go p))\risom \mc R(N)/\go P$.  As
before, take $u\:N\to F$ with $F$ free of finite rank and $\Im(\Sym(u))=
\mc R(N)$.  Then $\go p\mc R(N)\subset \go P= \mc R(N)\cap \go
p\Sym(F)$.  Since forming a symmetric algebra commutes with base change,
$\Sym(N)/\go p\Sym(N)= \Sym(N(\go p))$ and $\Sym(F)/\go
p\Sym(F)=\Sym(F/\go pF)$.  Hence $\Sym(u)$ induces the factorization
 $$\Sym(N(\go p))\onto \mc R(N)/\go p\mc R(N)
        \onto \mc R(N)/\go P \into\Sym(F/\go pF).$$
 Moreover, as $N_g$ is free, the two surjections become bijective on
localizing at $g$.  Thus there's a canonical surjection $\mc R(N(\go p))
\onto \mc R(N)/\go P$, and it becomes bijective on localizing at $g$.

 Take $v\:N(\go p)\to G$ with $G$ free over $R/\go p$ and $\Im(\Sym(v))=
\mc R(N(\go p))$.  As $R/\go p$ is a domain, so is $\Sym(G)$; hence, so
is $\mc R(N(\go p))$.  Thus $\mc R(N(\go p)) \risom \mc R(N)/\go P$.

The multiplicity $e(M(\go p),\,N(\go p))$ is defined using a setup
analogous to the one defining $e(M,\,N)$.  Denote the corresponding
objects by $B(\go p)$, $D(\go p)$, etc.  Then $D(\go p)= D\cap B(\go p)$
as $\mc R(N(\go p))\risom \mc R(N)/\go P$.  Also $\mu(\go p)$ and
$\nu(\go p)$ are the pullbacks of $\mu$ and $\nu$.  Assume $\go p\in
\Lambda$, and let $B_\go p$ be as above.  Then $B(\go p)=B_\go p$.  So
 $$ \mu^i\nu^j\cap(D\cdot[B_\go p])
 = \mu(\go p)^i\nu(\go p)^j\cap(D(\go p)\cdot[B(\go p)])
 \tqn{for any} i,\,j\ge0.$$
 Thus Eqn.\,\eqref{eqep} holds, as desired
  \end{proof}

\begin{remark}\label{reZS}
  On pp.\,297--299 in \cite{ZS}, Zariski and Samuel proved a special
case of Thm.\,\ref{thMain} for the ordinary multiplicity.  They
introduced their work with the following statement, which fits the
present paper strikingly well:

\smallskip
{\leftskip=3\parindent\noindent\llap{``}\ignorespaces
We conclude this section with the proof of a theorem which not only can
be used in certain cases for the computation of multiplicities, but also
gives information on the behavior of multiplicities under finite integral
extensions.  This theorem is the algebraic counterpart of the projection
formula for intersection cycles in Algebraic Geometry.''
\par\smallskip}

 To recover their result, take $N:=R$ and take $M$ to be an ideal $\go
q$ that is primary for the maximal ideal, so  $N/M$ is of finite
length.  Then as explained below, our $e(M,\,N)$ becomes their $e(\go
q)$.

They assumed that $R'$ is a module-finite overring of $R$ and that no
nonzero element of $R$ is a zerodivisor in $R'$.  So $R$ is a domain,
and every minimal prime of $R'$ contracts to 0.  So there's a nonzero
$f\in R$ with $R'_f$ a free $R_f$-module of rank $\dl$ where $\dl$ is
the dimension of the total ring of fractions of $R'$ considered as a
vector space over the fraction field of $R$.  Thus $R'$ is of finite
degree $\dl$ over $R$, and our Eqn.\,\eqref{eqMain} recovers their
Eqn.\,(8) on p.\,299.

They didn't exclude the case where $\dim R=0$, as we do, but this case
is rather easy to do directly, even for the Buchsbaum--Rim
multiplicity.  We must exclude it, because in it $Z$ and so $D$ are
empty.  On the other hand, this case is covered by the alternative
geometric treatment of the Buchsbaum--Rim multiplicity in \cite{MBRM}.
That treatment is similar, but simpler, and yields a general
mixed-multiplicity formula.  It begins by replacing the blow-up $B$ of
$P$ along $Z$ by the the completed normal cone of $Z$ in $P$.  So it
isn't suited for the study of generic conditions on $X$, such as a
finite map $X'\to X$ of pure degree.

They defined $e(\go q)$ in terms of the Hilbert--Samuel polynomial;
namely, its leading term is $e(\go q)n^s/s\,!$ where $s:=\dim R$.
Although they didn't relate $e(\go q)$ to geometry, nevertheless the
connection is explained in Samuel's book \cite{SM}: on pp.\,81--82, the
projection formula for cycles is derived from its ``counterpart,'' which
is proved on p.\,32 in his book \cite{SL}.  Furthermore, on pp.\,84--85
of \cite{SM}, the associativity of the intersection product for cycles
is derived via reduction to the diagonal from its algebraic
``counterpart,'' which is proved on pp.\,41--42 in \cite{SL}.  However,
Samuel worked with a restricted class of rings $R$.

This algebraic associativity formula was proved earlier by Chevalley in
Thm.\,5 on p.\,26 in \cite{Ch} as part of one of the first rigorous
treatments of geometric intersection theory and the first based on local
algebra.  However, he worked with geometric local rings, and his
approach does not generalize to arbitrary Noetherian local rings.  The
general case of the formula was proved by Lech \cite{Le} starting from
Samuel's definition of multiplicity.

The formula itself is like \eqref{eqAF} with $N:=R$ and $M:=\go q$, but
there are three differences.  First, $\go q$ is generated by a system of
parameters.  Second, $\go p$ ranges over those minimal primes of an
ideal $\go a$ generated by part of the system, and $\dim(R/\go p) +
\dim(R_\go p)= \dim(R)$.  Third, the length $\ell_\go p$ of $R_\go p$ is
replaced by the multiplicity $e(\go bR_\go p,\,R_\go p)$ where $\go b$
is the ideal generated by the remaining parameters.  Conceivably this
formula could be generalized to the Buchsbaum--Rim multiplicity by
extending the theory of mixed multiplicities developed in 
\cite[\S\,9]{GTBR}.

Taking $\go a = 0$ gives the formula recovered by \eqref{eqAF}.  It was
proved by Serre \cite[V.A).2]{Se}, who named it the {\it additivity
formula}.  Moreover, he defined the multiplicity of $\go q$ on a module,
and observed that, given a short exact sequence, the multiplicity of the
central module is the sum of the multiplicities of the extremes.
(Although here $\go q$ need not be generated by a system of parameters,
the added generality is illusory at least if the residue field of $R$ is
infinite, as $\go q$ can be replaced by a submodule generated by a
system of parameters without changing the multiplicities; see
\cite[Ex.\,4.3.5(a)]{Fu}.)  Many later authors have treated modules,
including Eisenbud, Huneke and Swanson, and Kirby and Rees.

For the Buchsbaum--Rim multiplicity, the modules are modules over the
Rees algebra.  Correspondingly, Thm.\,\ref{thMain} and Prp.\,\ref{prAF}
generalize in straightforward fashion: just replace $[P]$ with the
fundamental cycle of the module; see \cite[\S\,5]{GTBR}.  However, the
added generality doesn't seem to justify the added complexity of
exposition.

The first direct treatment of geometric intersection theory, without
reduction to algebraic counterparts, is found in Fulton's monumental
book \cite{Fu}.  At the start of \S\,4.3 on p.\,79 and in Ex.\,4.3.4 on
p.\,81, he defined the (Samuel) multiplicity as the degree of the
exceptional divisor of certain blow-up; thus he turned a formula given
by Ramanujam \cite{Ra} into a definition.  Also in Ex.\,4.3.4, Fulton
derived the additivity formula.  In Ex.\,4.3.7 on p.\,81, he derived the
projection formula.  In Ex.\,7.1.8 on p.\,123, he gave a version of the
general associativity formula.  The last three formulas are stated in
geometric terms, but the first two translate directly into their
algebraic equivalents over geometric local rings.  However, the third
formula involves intersection multiplicities, and is mathematically a
step away from a direct translation of Chevalley's algebraic
associativity formula.

Basically, our approach follows Fulton's.  However, ours is more
focused, and puts in evidence what must be double checked over an
arbitrary Noetherian local ring.  Furthermore, ours is generalized to
handle the Buchsbaum--Rim multiplicity.  Here the key is Formula
\eqref{eqDfnBR}, which is a generalization of Ramanujam's formula.  Much
of our effort is spent in manipulating Rees algebras.

Just as $e(\go q)$ can be expressed as the normalized leading term of
the Hilbert--Samuel polynomial, $e(M,\,N)$ can be expressed as the
normalized leading term of the Buchsbaum--Rim polynomial, whose value at
$n$ is the length of $\mc R_n(N)/\mc R_n(M)$ for $n\gg0$; namely,
Thm.\,(5.7) of \cite{GTBR} asserts that this polynomial exists, and that
its leading term is $e(M,\,N)n^s/s\,!$ where $s:=s(N)$.  This expression
for $e(M,\,N)$ was taken as its definition, in their setups, by
Buchsbaum and Rim at the bottom of p.\,213 in \cite{BR} and by other
authors, including Huneke and Swanson in Dfn.\,16.5.5 on p.\,316 in
\cite{HS} and Kirby and Rees on p.\,256 in \cite{KR}.

Notice that the Buchsbaum--Rim polynomial generalizes the
Hilbert--Samuel polynomial, since by \cite[Thm.\,1.4]{EHU}, the
abstract Rees algebra of an ideal is equal to the direct sum of its
powers; so $e(M,\,N)$ generalizes $e(\go q)$.  Correspondingly, Kirby
and Rees \cite[Thm.\,6.3,\,iii)]{KR} gave a purely algebraic proof of a
version of Prp.\,\ref{prAF}.  Doubtless, Thm.\,\ref{thMain} has a
similar proof, generalizing Zariski and Samuel's proof.
  \end{remark}

\filbreak

\bibliographystyle{amsplain}

\end{document}